\theoremstyle{plain}
\newtheorem{theorem}{Theorem}[section]
\newtheorem{proposition}[theorem]{Proposition}
\newtheorem{lemma}[theorem]{Lemma}
\newtheorem{corollary}[theorem]{Corollary}
\begin{document}

\baselineskip=15.5pt

\title[Automorphisms of the Quot schemes]{Automorphisms of the Quot
schemes associated to compact Riemann surfaces}

\author[I. Biswas]{Indranil Biswas}

\address{School of Mathematics, Tata Institute of Fundamental Research,
Homi Bhabha Road, Bombay 400005, India}

\email{indranil@math.tifr.res.in}

\author[A. Dhillon]{Ajneet Dhillon}

\address{Department of Mathematics, Middlesex College, University of
Western Ontario, London, ON N6A 5B7, Canada}

\email{adhill3@uwo.ca}

\author[J. Hurtubise]{Jacques Hurtubise}

\address{Department of Mathematics, McGill University, Burnside
Hall, 805 Sherbrooke St. W., Montreal, Que. H3A 0B9, Canada}

\email{jacques.hurtubise@mcgill.ca}

\subjclass[2000]{14H60, 14J50}

\keywords{Quot scheme, vortex moduli, vector field, automorphism, Torelli}

\date{}

\begin{abstract}
Let $X$ be a compact connected Riemann surface of genus at least two.
Fix positive integers $r$ and $d$. Let $\mathcal Q$ denote the Quot scheme that
parametrizes the torsion quotients of ${\mathcal O}^{\oplus r}_X$ of degree $d$.
This $\mathcal Q$ is also the moduli space of vortices for the standard action of
$\text{U}(r)$ on ${\mathbb C}^r$. The group $\text{PGL}(r, {\mathbb C})$
acts on $\mathcal Q$ via the action of $\text{GL}(r, {\mathbb C})$ on
${\mathcal O}^{\oplus r}_X$. We prove that this subgroup $\text{PGL}(r,
{\mathbb C})$ is the connected component, containing the identity element, of the
holomorphic automorphism group $\text{Aut}(\mathcal Q)$. As an application of it,
we prove that the isomorphism class of the complex manifold $\mathcal Q$ uniquely
determines the isomorphism class of the Riemann surface $X$.
\end{abstract}

\maketitle

\section{Introduction}

Let $X$ be a compact connected Riemann surface of genus $g$, with $g\,\geq\, 2$.
Fix positive integers $r$ and $d$. Let ${\mathcal Q}\, :=\,
{\rm Quot}_X({\mathcal O}^{\oplus r}_X,d)$ be the Quot scheme that parametrizes
the torsion quotients
$$
{\mathcal O}^{\oplus r}_X\, \longrightarrow\, Q
$$
such that $\text{degree}(Q)\,=\, d$. It is a smooth complex projective manifold
of dimension $rd$. Consider the standard action of $\text{U}(r)$ on ${\mathbb C}^r$.
The corresponding vortices are pairs of the form $(E\, ,\phi)$, where $E$ is a
holomorphic vector bundle on $X$ of rank $r$ and
$$
\phi\, :\, {\mathcal O}^{\oplus r}_X\, \longrightarrow\, E
$$
is a holomorphic homomorphism such that the subsheaf
$\text{image}(\phi)\, \subset\,E$ is of rank $r$,
equivalently, $E/\text{image}(\phi)$ is a torsion sheaf \cite{BDW}. Therefore,
the dual homomorphism
$$
\phi^*\, :\, E^*\, \longrightarrow\, {\mathcal O}^{\oplus r}_X
$$
defines an element of ${\mathcal Q}$ if $\text{degree}(E)\, =\, d$. 
Consequently, $\mathcal Q$ is a moduli space of vortices on $X$.

Our aim here is to investigate the geometry of the variety $\mathcal Q$.
Let $\text{Aut}^0(\mathcal Q)$ denote the connected component, containing the
identity element, of the group of holomorphic automorphisms of
$\mathcal Q$. The natural action of $\text{GL}(r, {\mathbb C})$ on
${\mathcal O}^{\oplus r}_X$ produces an action of
$\text{PGL}(r, {\mathbb C})$ on $\mathcal Q$. let
$$
F\, :\, \text{PGL}(r, {\mathbb C})\, \longrightarrow\, \text{Aut}^0(\mathcal Q)
$$
be the homomorphism given by this action.

We prove the following (see Theorem \ref{thm1} and Corollary \ref{cor1}):

\begin{theorem}\label{thm-i1}
The homomorphism $F$ is an isomorphism. In particular, the
homomorphism of Lie algebras
$$
sl(r, {\mathbb C})\, \longrightarrow\, H^0({\mathcal Q},\, T{\mathcal Q})
$$
given by $F$ is an isomorphism.
\end{theorem}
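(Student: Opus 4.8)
The plan is to reduce the statement to the single cohomological inequality $\dim H^0(\mathcal{Q},T\mathcal{Q})\le r^2-1$, and then to prove that inequality by showing that \emph{every} holomorphic vector field on $\mathcal{Q}$ is tangent to the fibres of the natural support morphism $\psi\colon\mathcal{Q}\to\text{Sym}^d(X)$. First, since $\mathcal{Q}$ is projective, $\text{Aut}(\mathcal{Q})$ is a group scheme locally of finite type over $\mathbb{C}$ (by a theorem of Matsumura and Oort), so $\text{Aut}^0(\mathcal{Q})$ is a connected algebraic group with Lie algebra $H^0(\mathcal{Q},T\mathcal{Q})$. The action of $\text{PGL}(r,\mathbb{C})$ is faithful: if $g\in\text{GL}(r,\mathbb{C})$ fixes every point of $\mathcal{Q}$, then it fixes every quotient $\mathcal{O}_X^{\oplus r}\to\mathbb{C}_x$ with $x$ held fixed; these are parametrised by $\mathbb{P}((\mathbb{C}^r)^{*})$, on which $g$ would then act trivially, forcing $g$ to be a scalar. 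Hence $F$ is injective, and (in characteristic zero) so is the Lie algebra map $dF\colon sl(r,\mathbb{C})\to H^0(\mathcal{Q},T\mathcal{Q})$; thus $\dim H^0(\mathcal{Q},T\mathcal{Q})\ge r^2-1$. Granting the reverse inequality, $dF$ is an isomorphism, and $F$ is an injective homomorphism of algebraic groups whose image is a closed connected subgroup of $\text{Aut}^0(\mathcal{Q})$ of full dimension; since $\text{Aut}^0(\mathcal{Q})$ is connected, $F$ is surjective, and a bijective homomorphism of algebraic groups in characteristic zero is an isomorphism.

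For the inequality, consider the morphism $\psi\colon\mathcal{Q}\to\text{Sym}^d(X)=\text{Hilb}^d(X)$ sending $[\mathcal{O}_X^{\oplus r}\to Q]$ to the length-$d$ subscheme defined by the $0$-th Fitting ideal of $Q$. Here $\text{Sym}^d(X)$ is smooth (hence normal), and $\psi$ is proper with connected generic fibre $(\mathbb{P}^{r-1})^d$, so (Stein factorisation, using normality of $\text{Sym}^d(X)$) $\psi_*\mathcal{O}_{\mathcal{Q}}=\mathcal{O}_{\text{Sym}^d(X)}$, whence by the projection formula $H^0(\mathcal{Q},\psi^{*}T\,\text{Sym}^d(X))=H^0(\text{Sym}^d(X),T\,\text{Sym}^d(X))$. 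The crucial input—and the main obstacle—is the vanishing
\[ H^0\bigl(\text{Sym}^d(X),\, T\,\text{Sym}^d(X)\bigr)=0\qquad (g\ge 2), \]
which is exactly the $r=1$ case of the theorem and so must be proved directly. I would deduce it from the finite quotient $\pi\colon X^d\to\text{Sym}^d(X)$: its differential embeds $TX^d$ into $\pi^{*}T\,\text{Sym}^d(X)$ with cokernel supported on the pairwise diagonals, where it is the pullback of a negative tensor power of $T_X$ (along a simple diagonal it is $p^{*}T_X^{\otimes 2}$), so the cokernel has no nonzero sections; since $H^0(X^d,TX^d)=0$ for $g\ge 2$, this gives $H^0(X^d,\pi^{*}T\,\text{Sym}^d(X))=0$, and passing to $S_d$-invariants yields the claim. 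It follows that $d\psi(v)=0$ for every $v\in H^0(\mathcal{Q},T\mathcal{Q})$; that is, every global vector field on $\mathcal{Q}$ lies in $H^0\bigl(\mathcal{Q},\ker(d\psi)\bigr)$.

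It remains to compute these vertical fields. Let $\text{Sym}^d(X)^{\circ}$ be the dense open locus of reduced divisors and $\mathcal{Q}^{\circ}:=\psi^{-1}(\text{Sym}^d(X)^{\circ})$; over $\mathcal{Q}^{\circ}$ the morphism $\psi$ is smooth, so $\ker(d\psi)|_{\mathcal{Q}^{\circ}}=T_{\mathcal{Q}^{\circ}/\text{Sym}^d(X)^{\circ}}$, and since $T\mathcal{Q}$ is locally free on the integral scheme $\mathcal{Q}$ with $\mathcal{Q}^{\circ}$ dense, restriction gives an injection $H^0(\mathcal{Q},T\mathcal{Q})\hookrightarrow H^0\bigl(\mathcal{Q}^{\circ},T_{\mathcal{Q}^{\circ}/\text{Sym}^d(X)^{\circ}}\bigr)$. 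Pulling back along the étale $S_d$-cover $U_d\to\text{Sym}^d(X)^{\circ}$ given by the ordered configuration space $U_d\subset X^d$, the family $\mathcal{Q}^{\circ}$ becomes the trivial bundle $U_d\times(\mathbb{P}((\mathbb{C}^r)^{*}))^d$, whose relative tangent sheaf pushes forward to $\mathcal{O}_{U_d}\otimes sl(r,\mathbb{C})^{\oplus d}$ with $S_d$ permuting the $d$ summands simultaneously with the points. Descending, $\psi_*T_{\mathcal{Q}^{\circ}/\text{Sym}^d(X)^{\circ}}\cong sl(r,\mathbb{C})\otimes\mathcal{V}_d$, where $\mathcal{V}_d$ is the rank-$d$ bundle attached to the permutation representation—equivalently, the pushforward of $\mathcal{O}$ along the universal reduced divisor $\Sigma^{\circ}\to\text{Sym}^d(X)^{\circ}$. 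As $\Sigma^{\circ}$ maps onto $X$ with connected fibres (each a symmetric power of a punctured curve), $\Sigma^{\circ}$ is connected, so $H^0(\text{Sym}^d(X)^{\circ},\mathcal{V}_d)=\mathbb{C}$ and hence $H^0\bigl(\mathcal{Q}^{\circ},T_{\mathcal{Q}^{\circ}/\text{Sym}^d(X)^{\circ}}\bigr)=sl(r,\mathbb{C})$, of dimension $r^2-1$. Combined with the first paragraph, $\dim H^0(\mathcal{Q},T\mathcal{Q})=r^2-1$, the map $dF$ is an isomorphism of Lie algebras, and $F$ is an isomorphism.
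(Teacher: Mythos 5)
Your overall architecture is the same as the paper's: show that every global vector field on $\mathcal{Q}$ is vertical for the support map to $\mathrm{Sym}^d(X)$, then identify the vertical fields over the locus of reduced divisors with the diagonal copy of $sl(r,\mathbb{C})$. The first half (verticality via $\psi_*\mathcal{O}_{\mathcal{Q}}=\mathcal{O}_{\mathrm{Sym}^d(X)}$ and the vanishing of $H^0(\mathrm{Sym}^d(X),T\,\mathrm{Sym}^d(X))$, deduced from a vanishing on $X^d$ twisted along the diagonals) is a legitimate variant of what the paper does, although your description of the cokernel of $d\pi\colon TX^d\to\pi^*T\,\mathrm{Sym}^d(X)$ is only correct at points where exactly two coordinates collide; at deeper strata the structure is more involved. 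A cleaner route, and essentially the paper's, is to note that $\pi^*T\,\mathrm{Sym}^d(X)\subset TX^d\bigl(\sum m_{j,k}\Delta_{j,k}\bigr)$ for suitable $m_{j,k}$ and invoke the vanishing $H^0\bigl(X^d, TX^d\otimes\mathcal{O}(\sum m_{j,k}\Delta_{j,k})\bigr)=0$, which the paper establishes by induction on the diagonals (Proposition \ref{prop1}).

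The genuine gap is in the last step. You claim $H^0(\mathrm{Sym}^d(X)^{\circ},\mathcal{V}_d)=H^0(\Sigma^{\circ},\mathcal{O}_{\Sigma^{\circ}})=\mathbb{C}$ \emph{because $\Sigma^{\circ}$ is connected}. But $\Sigma^{\circ}$ is a non-proper variety, and connectedness does not bound its global functions (compare $\mathbb{A}^1$). Concretely, $H^0(\Sigma^{\circ},\mathcal{O})$ is controlled by the regular functions on the ordered configuration space $X^d\setminus\bigcup\Delta_{j,k}$, i.e.\ by $\varinjlim_m H^0\bigl(X^d,\mathcal{O}(m\sum\Delta_{j,k})\bigr)$, and the statement that this is $\mathbb{C}$ is a nontrivial fact that uses $g\geq 2$ — it is exactly the first part of the paper's Lemma \ref{lem2}. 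Your justification never uses the genus at this step, and indeed the same reasoning would ``prove'' that the vertical fields over the configuration space form a copy of $sl(r,\mathbb{C})$ when $X=\mathbb{P}^1$, where this is false ($\Sigma^{\circ}$ then carries an infinite-dimensional space of regular functions, and correspondingly $\mathrm{Aut}^0(\mathcal{Q})$ is much larger than $\mathrm{PGL}(r,\mathbb{C})$). To close the gap you must prove that meromorphic functions on $X^d$ with poles only along the diagonals are constant; the paper does this by the same restriction-to-the-diagonal induction as for the tangent-twisted vanishing, using $H^0(X,(TX)^{\otimes m})=0$ for $m\geq 1$. With that lemma supplied, your argument goes through and recovers the paper's conclusion that a global field restricts on each fibre $(\mathbb{P}^{r-1})^d$ to a constant diagonal element $(v,\dots,v)$ with $v\in sl(r,\mathbb{C})$.
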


Theorem \ref{thm1} allows us to investigate the fixed point locus in $\mathcal Q$
for the action of a maximal torus of $\text{Aut}^0(\mathcal Q)$. As a
consequence, we obtain the following Torelli type theorem (see Theorem \ref{thm2}):

\begin{theorem}\label{thm-i2}
If $g\,=\, 2\,=\, d$, assume that $r\, \geq\, 2$. Then the isomorphism class
of the complex manifold $\mathcal Q$ uniquely determines the isomorphism
class of the Riemann surface $X$.
\end{theorem}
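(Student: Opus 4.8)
The plan is to reconstruct $X$ from the bare complex manifold $\mathcal Q$, using Theorem~\ref{thm-i1} to render the $\text{PGL}(r,{\mathbb C})$-action intrinsic. Since the automorphism group of a complex manifold, and its identity component, are isomorphism invariants, $\text{Aut}^0(\mathcal Q)$ is determined by $\mathcal Q$; by Theorem~\ref{thm-i1} it is isomorphic to $\text{PGL}(r,{\mathbb C})$, which is trivial precisely when $r=1$ and otherwise has dimension $r^2-1$. Hence $\mathcal Q$ determines $r$, and then $d=\frac{1}{r}\dim_{\mathbb C}\mathcal Q$. (In particular, if $\mathcal Q$ is biholomorphic to the Quot scheme attached to another triple $(X',r',d')$, then $(r',d')=(r,d)$, so it is enough to recover $X$.)

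Fix a maximal torus $T\subset\text{Aut}^0(\mathcal Q)\cong\text{PGL}(r,{\mathbb C})$. Any two maximal tori are conjugate by an element of $\text{Aut}^0(\mathcal Q)\subset\text{Aut}(\mathcal Q)$, so the fixed locus $\mathcal Q^T$ is well defined up to an automorphism of $\mathcal Q$; consequently the set of isomorphism classes of connected components of $\mathcal Q^T$, together with their topological invariants, depends only on $\mathcal Q$. Pulling $T$ back to the diagonal torus $\widetilde T\subset\text{GL}(r,{\mathbb C})$, which acts on $\mathcal O^{\oplus r}_X=\bigoplus_{i=1}^r\mathcal O_X e_i$ through pairwise distinct characters, a point of $\mathcal Q$ — a coherent subsheaf $K\subset\mathcal O^{\oplus r}_X$ with torsion quotient of length $d$ — is $T$-fixed if and only if $K$ is $\widetilde T$-invariant, hence of the form $K=\bigoplus_{i=1}^r\mathcal O_X(-D_i)e_i$ for effective divisors $D_i$ on $X$ of degrees $d_i\geq0$ with $d_1+\cdots+d_r=d$. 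Therefore
$$
\mathcal Q^T\;=\;\bigsqcup_{\substack{d_1,\ldots,d_r\geq0\\ d_1+\cdots+d_r=d}}\;\prod_{i=1}^r\text{Sym}^{d_i}(X),
$$
and the Weyl group $S_r$ acts on the set of components through automorphisms of $\mathcal Q$ coming from the normalizer of $T$.

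Now I would extract $X$ from a component singled out by a topological invariant. As $\text{Sym}^n(X)$ has first Betti number $2g$ for $n\geq1$ and is a point for $n=0$, the component $\prod_i\text{Sym}^{d_i}(X)$ has $b_1=2g\cdot\#\{i:d_i>0\}$. Thus the components of $\mathcal Q^T$ of smallest first Betti number are exactly the copies of $S:=\text{Sym}^d(X)$, and from the complex manifold $S$ one reads off $g=b_1(S)/2$ and $d=\dim_{\mathbb C}S$. If $d=1$ then $S=X$. If $d\geq2$ and $(g,d)\neq(2,2)$, I would invoke the known Torelli-type statement that for $g\geq2$ the symmetric product $\text{Sym}^d(X)$ determines $X$: its Albanese variety is $\text{Jac}(X)$, whose principal polarization is reconstructed from the weight-one Hodge structure on $H^1$ together with (for $d\geq g$) the structure of $\text{Sym}^d(X)$ as a projective bundle over $\text{Jac}^d(X)$, after which classical Torelli recovers $X$. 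Finally, in the case $g=2=d$, where by hypothesis $r\geq2$, the composition $(1,1,0,\ldots,0)$ is admissible and contributes a component isomorphic to $X\times X$, which is the unique isomorphism class of component of $\mathcal Q^T$ with $b_1=4g$; from $X\times X$ one recovers $X$ as a fibre of either projection (equivalently, by unique factorisation of a product of curves of genus $\geq2$).

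The main obstacle is the reconstruction of $X$ from $\text{Sym}^d(X)$ in the range $g\geq2$: one has to recover not merely the Albanese torus but its principal polarization from the complex-analytic structure of $\text{Sym}^d(X)$, and it is precisely the breakdown of this at $(g,d)=(2,2)$ — there $\text{Sym}^2(X)$ is the blow-up of the abelian surface $\text{Jac}^2(X)$ at a point, so two genus-two curves with isomorphic (unpolarized) Jacobians have $\text{Sym}^2(X)\cong\text{Sym}^2(X')$ — that necessitates the extra assumption $r\geq2$ in that case. A subsidiary point to verify is that all invariants used along the way (the identity component of $\text{Aut}(\mathcal Q)$, the fixed locus $\mathcal Q^T$, and the Betti numbers of its components) are genuinely intrinsic to the abstract complex manifold $\mathcal Q$; this is ensured by Theorem~\ref{thm-i1} and the conjugacy of maximal tori inside $\text{Aut}^0(\mathcal Q)$.
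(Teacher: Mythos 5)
Your proposal is correct and follows essentially the same route as the paper: recover $r$ and $d$ from $\mathrm{Aut}^0(\mathcal Q)\cong \mathrm{PGL}(r,\mathbb C)$ and $\dim\mathcal Q$, identify the minimal-$b_1$ component of the maximal-torus fixed locus with $\mathrm{Sym}^d(X)$, apply Fakhruddin's Torelli theorem for symmetric products outside the case $(g,d)=(2,2)$, and in that exceptional case use the component $X\times X$ (present since $r\geq 2$) to recover $X$. The paper simply cites \cite[Theorem 1]{fa} where you sketch its proof, but the argument is the same.
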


The proof of Theorem \ref{thm-i2} also uses a Torelli type theorem for
$\text{Sym}^d(X)$ proved in \cite{fa}.

Take a holomorphic line bundle $L$ on $X$. Let ${\mathcal Q}
(L^{\oplus r},d)$ be the Quot scheme parametrizing torsion quotients of
$L^{\oplus r}$ of dimension $d$. Then the variety ${\mathcal Q} (L^{\oplus r},d)$
is canonically isomorphic to $\mathcal Q$. Therefore, Theorem \ref{thm-i1}
and Theorem \ref{thm-i2} remain valid for ${\mathcal Q}(L^{\oplus r},d)$.

\section{Self-product, meromorphic functions and meromorphic vector fields}

Let $X$ be a compact connected Riemann surface of genus $g$, with $g\,\geq\, 2$. The
Cartesian product $X\times X$ will be denoted by $X^2$. Let $\Delta\, \subset\, X^2$
be the diagonal consisting of points of the form $(x\, ,x)$ with $x\, \in\, X$.
For $\ell\,=\, 1\, ,2$, let
$$
p_\ell\, :\, X^2\, \longrightarrow\, X
$$
be the projection to the $\ell$-th factor. The holomorphic tangent bundle of $X$ will
be denoted by $TX$.

\begin{lemma}\label{lem1}
For any $i\, \geq\, 0$,
$$
H^0(X^2,\, {\mathcal O}_{X^2}(i\cdot
\Delta))\,=\, H^0(X^2,\, {\mathcal O}_{X^2})\,=\, {\mathbb C}\, .
$$

For any $i\, \geq\, 0$,
$$
H^0(X^2,\, (p^*_1 TX)^{\otimes \alpha_1}\otimes
(p^*_2 TX)^{\otimes \alpha_2}\otimes {\mathcal O}_{X^2}(i\cdot 
\Delta))\,=\, 0\, ,
$$
where $\alpha_1$ and $\alpha_2$ are nonnegative integers with
$\alpha_1+\alpha_2\, >\, 0$.
\end{lemma}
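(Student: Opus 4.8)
The plan is to prove both statements simultaneously by induction on $i$, using the divisor exact sequence associated to $\Delta$ and restricting to the diagonal, where everything becomes a line bundle on $X$ of negative degree.

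First I would treat the base case $i=0$. Here $\mathcal{O}_{X^2}$ has $H^0(X^2,\mathcal{O}_{X^2})\,=\,\mathbb{C}$ because $X^2$ is connected, compact and reduced. For the second statement with $i=0$, the Künneth formula gives
$$
H^0\bigl(X^2,\,(p^*_1TX)^{\otimes\alpha_1}\otimes(p^*_2TX)^{\otimes\alpha_2}\bigr)
\,=\,H^0(X,\,(TX)^{\otimes\alpha_1})\otimes H^0(X,\,(TX)^{\otimes\alpha_2}),
$$
and since $\alpha_1+\alpha_2>0$, at least one exponent is positive; as $\deg(TX)=2-2g<0$ because $g\geq2$, the corresponding factor vanishes, so the whole group is zero.

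For the inductive step, fix $i\geq1$ and write $L\,:=\,(p^*_1TX)^{\otimes\alpha_1}\otimes(p^*_2TX)^{\otimes\alpha_2}$ (allowing $\alpha_1=\alpha_2=0$ for the first statement). Tensor the exact sequence
$$
0\,\longrightarrow\,\mathcal{O}_{X^2}\bigl((i-1)\Delta\bigr)\,\longrightarrow\,
\mathcal{O}_{X^2}(i\Delta)\,\longrightarrow\,\mathcal{O}_{X^2}(i\Delta)\big|_\Delta\,\longrightarrow\,0
$$
with $L$. The key point is to identify the restriction to $\Delta$: since $\mathcal{O}_{X^2}(\Delta)\big|_\Delta$ is the normal bundle $N_{\Delta/X^2}$, which under the identification $\Delta\,\cong\,X$ is $TX$, and since $p_1\big|_\Delta\,=\,p_2\big|_\Delta$ is the identity of $X$, we get
$$
L\otimes\mathcal{O}_{X^2}(i\Delta)\big|_\Delta\,\cong\,(TX)^{\otimes(\alpha_1+\alpha_2+i)}
$$
as a line bundle on $X$. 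Because $\alpha_1+\alpha_2+i\,\geq\,1$ in the first case ($i\geq1$) and $>0$ in the second, this bundle has negative degree, hence no nonzero global sections. The long exact cohomology sequence then gives $H^0(X^2,L\otimes\mathcal{O}_{X^2}(i\Delta))\,=\,H^0(X^2,L\otimes\mathcal{O}_{X^2}((i-1)\Delta))$, and the induction hypothesis finishes the argument.

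There is no serious obstacle here; the only thing one must get right is the identification of $\mathcal{O}_{X^2}(\Delta)\big|_\Delta$ with the tangent bundle $TX$ (equivalently, the conormal bundle of the diagonal is $\Omega^1_X$), together with the elementary observation that positive tensor powers of $TX$ have negative degree when $g\geq2$, so they are acyclic in degree zero. Everything else is a routine dévissage along $\Delta$.
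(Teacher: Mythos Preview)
Your proof is correct and follows essentially the same approach as the paper: the divisor exact sequence for $\Delta$, the adjunction identification $\mathcal{O}_{X^2}(\Delta)\big|_\Delta\cong TX$, and the vanishing of $H^0(X,(TX)^{\otimes m})$ for $m\geq 1$ when $g\geq 2$, assembled into an induction on $i$. The only cosmetic difference is that you treat both statements at once via the bundle $L$ and invoke K\"unneth explicitly for the base case, whereas the paper handles the two statements in parallel but separately.
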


\begin{proof}
Let $\iota\,:\, X\, \longrightarrow\, X^2$ be the map defined by $x\,\longmapsto\,
(x\, ,x)$. It identifies $X$ with $\Delta$. From the Poincar\'e adjunction formula we know
that $\iota^* ({\mathcal O}_{X^2}(\Delta)\vert_\Delta) \,=\, TX$; see
\cite[p. 146]{GH} for Poincar\'e adjunction formula.

For any $i\, \geq\, 0$, consider the short exact sequence of coherent analytic sheaves
on $X^2$
\begin{equation}\label{f1}
0\,\longrightarrow\, {\mathcal O}_{X^2}(i\cdot \Delta)\,\longrightarrow\,
{\mathcal O}_{X^2}((i+1)\cdot \Delta)\,\longrightarrow\,
{\mathcal O}_{X^2}((i+1)\cdot \Delta)\vert_\Delta\,=\, \iota_*((TX)^{\otimes (i+1)})
\,\longrightarrow\, 0\, .
\end{equation}
Let
$$
H^0(X^2,\, {\mathcal O}_{X^2}(i\cdot \Delta))\,\longrightarrow\,
H^0(X^2,\, {\mathcal O}_{X^2}((i+1)\cdot \Delta))\,\longrightarrow\,
H^0(X,\, (TX)^{\otimes (i+1)})
$$
be the corresponding long exact sequence of cohomologies. We have
$$
H^0(X,\, (TX)^{\otimes (i+1)})\,=\, 0
$$
because $g\, \geq\, 2$. Hence the above homomorphism
$$
H^0(X^2,\, {\mathcal O}_{X^2}(i\cdot \Delta))\,\longrightarrow\,
H^0(X^2,\, {\mathcal O}_{X^2}((i+1)\cdot \Delta))
$$
is an isomorphism. Now using downward induction on $i$,
$$
H^0(X^2,\, {\mathcal O}_{X^2}(i\cdot \Delta))\,= \,
H^0(X^2,\, {\mathcal O}_{X^2})\, =\, \mathbb C\, .
$$
This proves the first part of the lemma.

To prove the second part of the lemma,
consider the short exact sequence of coherent analytic sheaves on $X^2$
$$
0\,\longrightarrow\, (p^*_1 TX)^{\otimes\alpha_1}
\otimes (p^*_2 TX)^{\otimes\alpha_2}\otimes {\mathcal O}_{X^2}(i\cdot
\Delta)\,\longrightarrow\, (p^*_1 TX)^{\otimes\alpha_1}
\otimes (p^*_2 TX)^{\otimes\alpha_2} \otimes {\mathcal O}_{X^2}((i+1)
\cdot \Delta)
$$
$$
\longrightarrow\, \iota_*((TX)^{\otimes (\alpha_1+\alpha_2+i+1)})
\,\longrightarrow\, 0
$$
obtained by tensoring \eqref{f1} with $(p^*_1 TX)^{\otimes \alpha_1}\otimes
(p^*_2 TX)^{\otimes \alpha_2}$. Since $H^0(X,\, (TX)^{\otimes m})\,=\, 0$ for
all $m\, \geq\, 1$, using downward induction on $i$, we have
$$
H^0(X^2,\, (p^*_1 TX)^{\otimes \alpha_1}\otimes
(p^*_2 TX)^{\otimes \alpha_2}\otimes {\mathcal O}_{X^2}(i\cdot
\Delta))\,=\,H^0(X^2,\, (p^*_1 TX)^{\otimes \alpha_1}\otimes
(p^*_2 TX)^{\otimes \alpha_2})\,=\, 0\, .
$$
This completes the proof.
\end{proof}

Take any integer $n\, \geq\, 1$. Let
$$
X^n\, :=\, \overbrace{X\times \cdots\times X}^{n\text{-times}}
$$
be the $n$-fold Cartesian product. The projection of $X^n$ to the $\ell$-th factor,
$1\,\leq\, \ell\, \leq\, n$, will be denoted by $p_\ell$. For $1\,\leq\, j\, <\, k\,
\leq\, n$, let
\begin{equation}\label{djk}
\Delta_{j,k}\,\subset\, X^n
\end{equation}
be the divisor consisting of points whose of $X^n$ $j$-th coordinate coincides
with the $k$-th coordinate. 

Take integers $m_{j,k}\, \geq\, 0$, where $1\,\leq\, j\, <\, k\,\leq\, n$. 
Fix a pair $(j_0\, ,k_0)$, with $1\,\leq\, j_0\, <\, k_0\,\leq\, n$. Define
$m'_{j,k}$, $1\,\leq\, j\, <\, k\,\leq\, n$, as follows:
\begin{itemize}
\item $m'_{j_0,k_0}\,=\, m_{j_0,k_0}+1$, and

\item $m'_{j,k}\,=\, m_{j,k}$ if $(j\, ,k)\,
\not=\, (j_0\, ,k_0)$.
\end{itemize}

\begin{lemma}\label{lem2}
For any $i\, \geq\, 0$, the natural inclusion
$$
H^0(X^n,\, {\mathcal O}_{X^n}(\sum_{1\leq j < k\leq n} m_{j,k}\cdot
\Delta_{j,k}))\, \hookrightarrow\,
H^0(X^n,\, {\mathcal O}_{X^n}(\sum_{1\leq j < k\leq n} m'_{j,k}\cdot
\Delta_{j,k}))
$$
is an isomorphism.

For nonnegative integers $\alpha_\ell$, $\ell\, \in\, \{1\, ,\cdots\, ,n\}$,
with $\sum_{\ell=1}^n \alpha_\ell\, >\, 0$, the natural inclusion
$$
H^0(X^n,\, (\bigotimes_{\ell=1}^n (p^*_\ell TX)^{\otimes \alpha_\ell})
\otimes {\mathcal O}_{X^n}(\sum_{1\leq j
< k\leq n} m_{j,k}\cdot \Delta_{j,k}))
$$
$$
\,\hookrightarrow\,
H^0(X^n,\, (\bigotimes_{\ell=1}^n (p^*_\ell TX)^{\otimes \alpha_\ell})
\otimes {\mathcal O}_{X^n}(\sum_{1\leq j <
k\leq n} m'_{j,k}\cdot \Delta_{j,k}))
$$
is an isomorphism.
\end{lemma}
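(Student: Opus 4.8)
The plan is to prove both parts of Lemma \ref{lem2} simultaneously by induction on $n$, with Lemma \ref{lem1} furnishing the base case $n=2$: for $n=2$, Lemma \ref{lem1} shows that the two sides of each displayed inclusion coincide (both equal $\mathbb{C}$ in the first part, both equal $0$ in the second), so the natural inclusion is automatically an isomorphism. So fix $n\geq3$ and assume the lemma for $X^{n-1}$. Exactly as in the proof of Lemma \ref{lem1}, put $D:=\sum_{1\leq j<k\leq n}m_{j,k}\cdot\Delta_{j,k}$ and $D':=D+\Delta_{j_0,k_0}$, and tensor the structure sequence $0\to\sO_{X^n}(-\Delta_{j_0,k_0})\to\sO_{X^n}\to\sO_{\Delta_{j_0,k_0}}\to0$ by $\sO_{X^n}(D')$ — and, in the second part, also by $\bigotimes_{\ell=1}^n(p_\ell^*TX)^{\otimes\alpha_\ell}$. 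In the resulting long exact sequence of cohomology the displayed natural inclusion is an isomorphism provided $H^0$ of the restriction of the relevant sheaf to $\Delta_{j_0,k_0}$ vanishes; establishing this vanishing is the whole task.

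Next I would compute that restriction. Forgetting the $k_0$-th factor identifies $\Delta_{j_0,k_0}$ with $X^{n-1}$, the indices $j_0$ and $k_0$ merging into a single index $m$. By the Poincar\'e adjunction formula, used for the pair $(j_0,k_0)$ exactly as Lemma \ref{lem1} used it for $(1,2)$, the normal bundle $\sO_{X^n}(\Delta_{j_0,k_0})\vert_{\Delta_{j_0,k_0}}$ becomes $p_m^*TX$ on $X^{n-1}$. For $(j,k)\neq(j_0,k_0)$ the divisor $\Delta_{j,k}$ meets $\Delta_{j_0,k_0}$ transversally, so $\Delta_{j,k}\vert_{\Delta_{j_0,k_0}}$ is a reduced diagonal of $X^{n-1}$ — several of them (e.g. $\Delta_{j_0,k}$ and $\Delta_{k_0,k}$) may restrict to the same diagonal, in which case the coefficients add — and hence $D\vert_{\Delta_{j_0,k_0}}$ is an effective sum $\overline{D}$ of diagonals of $X^{n-1}$. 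Finally $(p_{j_0}^*TX)^{\otimes\alpha_{j_0}}\otimes(p_{k_0}^*TX)^{\otimes\alpha_{k_0}}$ restricts to $(p_m^*TX)^{\otimes(\alpha_{j_0}+\alpha_{k_0})}$. Putting this together, the restriction to $\Delta_{j_0,k_0}$ of the sheaf in the long exact sequence is
$$
\bigl(\bigotimes_{\ell}(p_\ell^*TX)^{\otimes\beta_\ell}\bigr)\otimes\sO_{X^{n-1}}(\overline{D})
$$
on $X^{n-1}$, where $\beta_m=\alpha_{j_0}+\alpha_{k_0}+1\geq1$ (the extra $1$ coming from the normal bundle), so $\sum_{\ell}\beta_\ell>0$ in both parts of the lemma.

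Finally, to see that $H^0\bigl(X^{n-1},(\bigotimes_{\ell}(p_\ell^*TX)^{\otimes\beta_\ell})\otimes\sO_{X^{n-1}}(\overline{D})\bigr)=0$, I would apply the second part of the lemma for $X^{n-1}$ once for each diagonal occurring in $\overline{D}$; this identifies the group with $H^0(X^{n-1},\bigotimes_{\ell}(p_\ell^*TX)^{\otimes\beta_\ell})$, which by the K\"unneth formula equals $\bigotimes_{\ell}H^0(X,(TX)^{\otimes\beta_\ell})$, and this vanishes because $g\geq2$ and $\beta_m\geq1$ force the factor $H^0(X,(TX)^{\otimes\beta_m})$ to be zero. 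This closes the induction. The argument is entirely routine except for the bookkeeping in the middle step — tracking how the various diagonals and tangent twists on $X^n$ restrict to $X^{n-1}$, and the transversality needed to know the restricted divisors are reduced — and for the small but crucial observation that the inductive step uses only the \emph{second} part of the statement for $X^{n-1}$, even while proving the first part for $X^n$; this is what makes the joint induction self-contained.
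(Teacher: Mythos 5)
Your proof is correct and follows essentially the same route as the paper: induction on $n$ via the restriction sequence to $\Delta_{j_0,k_0}$, Poincar\'e adjunction to identify the normal bundle with $p_m^*TX$, and the second part of the inductive hypothesis (plus K\"unneth) to kill the $H^0$ of the restricted sheaf --- including your correct observation that only the second statement for $X^{n-1}$ is needed in both inductive steps. One cosmetic slip: the exponent on the normal-bundle factor should be $\alpha_{j_0}+\alpha_{k_0}+m_{j_0,k_0}+1$ rather than $\alpha_{j_0}+\alpha_{k_0}+1$, since $D$ itself already contains $m_{j_0,k_0}\cdot\Delta_{j_0,k_0}$; this changes nothing, as only $\beta_m\geq 1$ is needed for the vanishing.
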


\begin{proof}
If $n\,=\, 1$, then $\Delta_{j,k}$ are the zero divisors. Hence the lemma
holds for $n\,=\,1$ as $g\, \geq\, 2$.
If $n\,=\, 2$, then it reduces to Lemma \ref{lem1}. We will prove the
lemma using induction on $n$.

Assume that the lemma is proved for all $n\, \leq\, a-1$.
For any $\ell\, \in\, \{1\, ,\cdots\, ,a-1\}$, let
\begin{equation}\label{ql}
q_\ell\,:\, X^{a-1}\,\longrightarrow\, X
\end{equation}
be the projection to the $\ell$-th factor. Fix nonnegative integers $n_{j,k}$ for every
$1\,\leq\, j < k\,\leq\, a-1$. Since the first statement of the lemma holds
for $n\,=\, a-1$, we conclude that
\begin{equation}\label{f2}
H^0(X^{a-1},\, {\mathcal O}_{X^{a-1}}(\sum_{1\leq j < k\leq a-1} n_{j,k}\cdot
\Delta_{j,k}))\,=\, {\mathbb C}\, ,
\end{equation}
where $\Delta_{j,k}\,\subset\, X^{a-1}$ is the divisor defined in \eqref{djk}.
Since the second statement of the lemma holds for $n\,=\, a-1$, we conclude that
\begin{equation}\label{f3}
H^0(X^{a-1},\, (\bigotimes_{\ell=1}^{a-1} (q^*_\ell TX)^{\otimes \alpha_\ell})\otimes
{\mathcal O}_{X^{a-1}}(\sum_{1\leq j < k\leq a-1} n_{j,k}\cdot
\Delta_{j,k}))\,=\, 0\, ,
\end{equation}
where $\alpha_\ell$ are nonnegative integers with $\sum_{\ell=1}^{a-1}
\alpha_\ell\, >\, 0$.

We will prove the lemma for $n\,=\, a$.

Take $(j_0\, ,k_0)$, $\{m_{j,k}\}$ and $\{m'_{j,k}\}$ as in the lemma.
Let 
$$
\iota\,:\, X^{a-1}\, \longrightarrow\, X^a\,=\, X^n
$$
be the map that sends $(x_1\, ,\cdots\, ,x_{a-1})$ to
$(y_1\, ,\cdots\, ,y_{a})$ such that
\begin{itemize}
\item $y_c\,=\, x_c$ if $c\, \leq\, k_0-1$,

\item $y_{k_0}\,=\, x_{j_0}$, and

\item $y_c\,=\, x_{c-1}$ if $c\, >\, k_0$.
\end{itemize}
So $\iota$ identifies $X^{a-1}$ with $\Delta_{j_0,k_0}$. The Poincar\'e adjunction
formula says that
\begin{equation}\label{pa}
\iota^* ({\mathcal O}_{X^a}(\Delta_{j_0,k_0})\vert_{\Delta_{j_0,k_0}}) \,=\,
(q_{j_0})^*TX\, ,
\end{equation}
where $q_{j_0}$ is defined in \eqref{ql}.
For any $(j\, ,k)\, \not=\, (j_0\, ,k_0)$, let
\begin{equation}\label{dp}
\Delta'_{j,k}\, :=\, \iota^{-1}(\Delta_{j,k}\cap \Delta_{j_0,k_0})
\, \subset\, X^{a-1}
\end{equation}
be the diagonal divisor.

Consider the short exact sequence of coherent analytic sheaves on $X^a$
\begin{equation}\label{e1}
0\,\longrightarrow\, {\mathcal O}_{X^a}(\sum_{1\leq j < k\leq a} m_{j,k}\cdot
\Delta_{j,k})\,\longrightarrow\,
{\mathcal O}_{X^a}(\sum_{1\leq j < k\leq a} m'_{j,k}\cdot
\Delta_{j,k})\, \longrightarrow\,
({\mathcal O}_{X^a}(\sum_{1\leq j < k\leq a} m'_{j,k}\cdot
\Delta_{j,k}))\vert_{\Delta_{j_0,k_0}}
\end{equation}
$$
=\,\iota_*
((q_{j_0})^*(TX)^{\otimes m'_{j_0,k_0}}\otimes 
{\mathcal O}_{X^{a-1}}(\sum_{(j,k)\not= (j_0,k_0)} m'_{j,k}\cdot
\Delta'_{j,k}))\,\longrightarrow\, 0\, ,
$$
where $\Delta'_{j,k}$ is defined in \eqref{dp} and
$q_{j_0}$ is defined in \eqref{ql}; the identification
$$
{\mathcal O}_{X^a}(\sum_{1\leq j < k\leq a} m'_{j,k}\cdot
\Delta_{j,k})\, \longrightarrow\,
({\mathcal O}_{X^a}(\sum_{1\leq j < k\leq a} m'_{j,k}\cdot
\Delta_{j,k}))\vert_{\Delta_{j_0,k_0}} 
$$
$$
=\,\iota_*
((q_{j_0})^*(TX)^{\otimes m'_{j_0,k_0}}\otimes
{\mathcal O}_{X^{a-1}}(\sum_{(j,k)\not= (j_0,k_0)} m'_{j,k}\cdot
\Delta'_{j,k}))
$$
in \eqref{e1} is constructed using
the isomorphism in \eqref{pa}. From \eqref{f3} we know that
$$
H^0(X^{a-1},\, (p'_{j_0})^*(TX)^{\otimes m'_{j_0,k_0}}\otimes
{\mathcal O}_{X^{a-1}}(\sum_{(j,k)\not= (j_0,k_0)} m'_{j,k}\cdot
\Delta'_{j,k}))\,=\, 0\, .
$$
Therefore, from the long exact sequence of cohomologies associated to the
short exact sequence in \eqref{e1} we conclude that
$$
H^0(X^{a},\, {\mathcal O}_{X^a}(\sum_{1\leq j < k\leq a} m_{j,k}\cdot
\Delta_{j,k}))\,=\,
H^0(X^{a},\, {\mathcal O}_{X^a}(\sum_{1\leq j < k\leq a} m'_{j,k}\cdot
\Delta_{j,k}))\, .
$$
This proves the first statement of the lemma for $n\,=\, a$. Therefore,
the proof of the first statement of the lemma is complete by induction on $n$.

We will now prove the second statement.
Take $\{\alpha_\ell\}$ as in the second statement of the lemma. For
$\ell\, \in\, \{1\, ,\cdots\, ,a-1\}$, define $\alpha'_\ell$ as follows:
\begin{itemize}
\item $\alpha'_\ell\,=\, \alpha_\ell$ if $\ell\, <\, j_0$,

\item $\alpha'_{j_0}\,=\, \alpha_{j_0}+\alpha_{k_0}$,

\item $\alpha'_\ell\,=\, \alpha_\ell$ if $j_0\, <\, \ell\, <\, k_0$, and

\item $\alpha'_\ell\,=\, \alpha_{\ell+1}$ if $\ell\, \geq\, k_0$.
\end{itemize}
Note that $\sum_{\ell=1}^a \alpha_\ell\,=\, \sum_{\ell=1}^{a-1} \alpha'_\ell$.

Let
$$
0\,\longrightarrow\, (\bigotimes_{\ell=1}^a (p^*_\ell TX)^{\otimes \alpha_\ell})
\otimes{\mathcal O}_{X^a}(\sum_{1\leq j < k\leq a} m_{j,k}\cdot
\Delta_{j,k})\,\longrightarrow\,(\bigotimes_{\ell=1}^a (p^*_\ell TX)^{\otimes \alpha_\ell})
\otimes {\mathcal O}_{X^a}(\sum_{1\leq j < k\leq a} m'_{j,k}\cdot
\Delta_{j,k})
$$
$$
\longrightarrow\, \iota_*
((\bigotimes_{\ell=1}^{a-1} (q^*_\ell TX)^{\otimes \alpha'_\ell})
\otimes (q_{j_0})^*(TX)^{\otimes m'_{j_0,k_0}}\otimes
{\mathcal O}_{X^{a-1}}(\sum_{(j,k)\not= (j_0,k_0)} m'_{j,k}\cdot
\Delta'_{j,k}))\,\longrightarrow\, 0\, ,
$$
be the short exact sequence of coherent analytic sheaves on $X^a$
obtained by tensoring \eqref{e1} with $\bigotimes_{\ell=1}^a (p^*_\ell TX)^{\otimes
\alpha_\ell}$. From \eqref{f3} we know that
$$
H^0(X^{a-1},\, (\bigotimes_{\ell=1}^{a-1} (q^*_\ell TX)^{\otimes \alpha'_\ell})
\otimes (q_{j_0})^*(TX)^{\otimes m'_{j_0,k_0}}\otimes
{\mathcal O}_{X^{a-1}}(\sum_{(j,k)\not= (j_0,k_0)} m'_{j,k}\cdot
\Delta'_{j,k}))\,=\, 0\, .
$$
Therefore, from the long exact sequence of cohomologies associated to the
above short exact sequence of sheaves
we conclude that the second statement of the lemma
holds for $n\,=\,a$. This completes the proof by induction on $n$.
\end{proof}

\begin{proposition}\label{prop1}
For any $n\, \geq\, 1$ and nonnegative integers $m_{j,k}$,
$1\,\leq\, j\, <\, k\,\leq\, n$,
$$
H^0(X^n,\, TX^n\otimes {\mathcal O}_{X^n}(\sum_{1\leq j
< k\leq n} m_{j,k}\cdot \Delta_{j,k}))\,=\, 0\, ,
$$
where $TX^n$ is the holomorphic tangent bundle of $X^n$.
\end{proposition}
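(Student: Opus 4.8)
The plan is to deduce the vanishing directly from Lemma \ref{lem2}, using the fact that the tangent bundle of a product splits. The first step is to write
$$
TX^n\,=\, \bigoplus_{\ell=1}^n p^*_\ell TX\, ,
$$
which gives a decomposition
$$
H^0(X^n,\, TX^n\otimes {\mathcal O}_{X^n}(\sum_{1\leq j < k\leq n} m_{j,k}\cdot \Delta_{j,k}))\,=\,
\bigoplus_{\ell=1}^n H^0(X^n,\, p^*_\ell TX\otimes {\mathcal O}_{X^n}(\sum_{1\leq j < k\leq n} m_{j,k}\cdot \Delta_{j,k}))\, .
$$
So it is enough to prove that each summand on the right vanishes; I would fix $\ell$ and work with one summand at a time.

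For a fixed $\ell$, the summand $H^0(X^n,\, p^*_\ell TX\otimes {\mathcal O}_{X^n}(\sum_{1\leq j < k\leq n} m_{j,k}\cdot \Delta_{j,k}))$ is exactly of the type appearing in the second statement of Lemma \ref{lem2}, with $\alpha_\ell\,=\, 1$ and $\alpha_c\,=\, 0$ for $c\,\neq\,\ell$; in particular $\sum_c\alpha_c\,=\, 1\,>\, 0$, so the hypothesis of Lemma \ref{lem2} is met. Starting from the trivial divisor (all coefficients zero) and increasing the coefficients $m_{j,k}$ one unit at a time, a finite chain of applications of Lemma \ref{lem2} shows that the inclusion
$$
H^0(X^n,\, p^*_\ell TX)\,\hookrightarrow\,
H^0(X^n,\, p^*_\ell TX\otimes {\mathcal O}_{X^n}(\sum_{1\leq j < k\leq n} m_{j,k}\cdot \Delta_{j,k}))
$$
is an isomorphism for every choice of nonnegative integers $\{m_{j,k}\}$. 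Thus the proposition reduces to showing $H^0(X^n,\, p^*_\ell TX)\,=\, 0$.

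For the last step I would invoke the K\"unneth formula: since $p^*_\ell TX$ is the external tensor product of $TX$ placed on the $\ell$-th factor with copies of ${\mathcal O}_X$ on the remaining factors,
$$
H^0(X^n,\, p^*_\ell TX)\,=\, H^0(X,\, TX)\otimes H^0(X,\, {\mathcal O}_X)^{\otimes(n-1)}\,=\, H^0(X,\, TX)\,=\, 0\, ,
$$
the final vanishing because $\deg TX\,=\, 2-2g\,<\, 0$ as $g\,\geq\, 2$. I do not expect a genuine obstacle here: the statement is a formal consequence of Lemma \ref{lem2}, the splitting of $TX^n$, and K\"unneth. The only point that needs a little care is the iteration in the middle step — one must raise the divisor coefficients one at a time so that each individual step is literally of the form covered by Lemma \ref{lem2}, rather than attempting to apply it in a single stroke.
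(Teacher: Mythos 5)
Your proof is correct and follows essentially the same route as the paper: split $TX^n$ as $\bigoplus_\ell p^*_\ell TX$ and invoke the second statement of Lemma \ref{lem2} (with $\alpha_\ell=1$ and the other $\alpha$'s zero), iterated down to the trivial divisor, where K\"unneth and $H^0(X,TX)=0$ finish the argument. The paper compresses the iteration and the base case into the citation of Lemma \ref{lem2}; you have merely made those steps explicit.
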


\begin{proof}
Since
$$
TX^n\,=\, \bigoplus_{\ell=1}^n p^*_\ell TX\, ,
$$
where $p_\ell$ is the projection of $X^n$ to the $\ell$-th factor,
the proposition follows from the second statement in Lemma \ref{lem2}.
\end{proof}

\section{Vector fields on the Quot scheme}

Let $E^0\,:=\, {\mathcal O}^{\oplus r}_X$ be the trivial holomorphic vector bundle
over $X$ of rank $r$. Fix a positive integer $d$. Let
$$
{\mathcal Q}\, :=\, {\rm Quot}_X(E^0,d)
$$ 
be the Quot scheme that parametrizes the torsion quotients of $E^0$ of dimension $d$.
So each point of $\mathcal Q$ represents a quotient
\begin{equation}\label{ve}
\varphi\, :\, E^0\, \twoheadrightarrow\, Q
\end{equation}
of the ${\mathcal O}_X$--module $E^0$ such that
\begin{itemize}
\item $Q$ is a torsion ${\mathcal O}_X$--module, and

\item $\dim H^0(X,\, Q)\,=\, d$.
\end{itemize}
The obstruction to the smoothness of the variety ${\mathcal Q}$ at the point $Q\,
\in\,\mathcal Q$ is given by $\text{Ext}^1_{{\mathcal O}_X}(\text{kernel}(\varphi)\, ,
Q)$ \cite[p. 1, Theorem 2]{Be}. For any $\varphi$ as in \eqref{ve}, since
$\text{kernel}(\varphi)$ is locally free,
and the dimension of the support of $Q$ is zero. we have
$$\text{Ext}^1_{{\mathcal O}_X}(\text{kernel}(\varphi)\, , Q)\,=\, 0\, .$$
Therefore, ${\mathcal Q}$ is an irreducible smooth complex projective variety.
Its dimension is $rd$.

Consider the tautological action of the group $\text{Aut}(E^0)\,=\,
\text{GL}(r,{\mathbb C})$ on $E^0$. It produces an effective action of
$\text{PGL}(r,{\mathbb C})$ on ${\mathcal Q}$
\begin{equation}\label{ea}
\text{PGL}(r,{\mathbb C})\times {\mathcal Q}\, \longrightarrow\,
{\mathcal Q}\, .
\end{equation}
Consider the Lie algebra $sl(r,{\mathbb C})$ (the space of $r\times r$
complex matrices of trace zero) of $\text{PGL}(r,{\mathbb C})$. Let
\begin{equation}\label{ga}
\gamma\, :\, sl(r,{\mathbb C})\,\longrightarrow\,
H^0({\mathcal Q},\, T{\mathcal Q})
\end{equation}
be the homomorphism of Lie algebras given by the action of
$\text{PGL}(r,{\mathbb C})$ on ${\mathcal Q}$ in \eqref{ea}. (The
Lie algebra structure of $H^0({\mathcal Q},\, T{\mathcal Q})$ is given by
Lie bracket of vector fields.)

\begin{theorem}\label{thm1}
The homomorphism $\gamma$ in \eqref{ga} is an isomorphism.
\end{theorem}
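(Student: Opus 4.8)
The plan is to show that $\gamma$ is injective and that $\dim H^0(\mathcal{Q}, T\mathcal{Q}) \leq \dim sl(r,\mathbb{C}) = r^2-1$. Injectivity is essentially automatic: the action of $\text{PGL}(r,\mathbb{C})$ on $\mathcal{Q}$ is effective (as noted in the excerpt), and since $\text{PGL}(r,\mathbb{C})$ is connected, an element of $sl(r,\mathbb{C})$ in the kernel of $\gamma$ would generate a one-parameter subgroup acting trivially, forcing it to be zero. So the real content is the dimension bound $h^0(\mathcal{Q}, T\mathcal{Q}) \leq r^2-1$.

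\medskip

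The key idea for the bound is to relate $\mathcal{Q}$ to a space built from Cartesian powers of $X$ via a resolution, so that Proposition \ref{prop1} can be brought to bear. There is a natural ``Hilbert--Chow'' type morphism $\mathcal{Q} \to \text{Sym}^d(X)$ sending a torsion quotient to the support of $Q$ counted with multiplicity. More usefully, I would introduce the incidence variety $\widetilde{\mathcal{Q}}$ that resolves the choice of support: a point of $\widetilde{\mathcal{Q}}$ is a quotient $\varphi\colon E^0 \twoheadrightarrow Q$ together with an ordering of the (generically distinct) points of the support, so $\widetilde{\mathcal{Q}}$ admits a generically finite map $\pi\colon \widetilde{\mathcal{Q}} \to \mathcal{Q}$ of degree $d!$ and a compatible map to $X^d$. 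Pulling back a global vector field on $\mathcal{Q}$ gives a global vector field on $\widetilde{\mathcal{Q}}$ invariant under the $S_d$ action, and I would aim to show that the vector fields on $\widetilde{\mathcal{Q}}$ all come from the fiber directions (i.e. from $sl(r,\mathbb{C})$) because the "horizontal'' part, which would project to a vector field on an open subset of $X^d$ twisted by the relevant line bundles from the fibers, is forced to vanish by Proposition \ref{prop1}. Concretely, over the open locus $U \subset \mathcal{Q}$ where the support of $Q$ consists of $d$ distinct points, $\mathcal{Q}$ fibers over the configuration space with fiber a product of $d$ copies of $\mathbb{P}^{r-1}$ (each point contributing a line in $\mathbb{C}^r$, i.e. a one-dimensional quotient at that point), and the restriction map $H^0(\mathcal{Q}, T\mathcal{Q}) \to H^0(U, T\mathcal{Q}|_U)$ is injective since $\mathcal{Q}$ is irreducible and $\mathcal{Q}\setminus U$ has positive codimension.

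\medskip

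So the computation reduces to understanding $H^0(U, TU)$ and showing it has dimension exactly $r^2-1$. Using the fibration $U \to \text{Conf}^d(X)$ (configuration space of $d$ unordered points) with fiber $(\mathbb{P}^{r-1})^d$, and the relative Euler-type sequence, the vertical tangent bundle contributes, on each $\mathbb{P}^{r-1}$ factor, sections of $T\mathbb{P}^{r-1} \cong sl(r,\mathbb{C})/(\text{stabilizer})$; assembling these $S_d$-equivariantly over the configuration space and using that the line-subbundle data varies, the only globally consistent vertical fields are the constant ones coming from $sl(r,\mathbb{C})$ acting diagonally, while the horizontal part maps to sections over (a cover of) $X^d$ of $TX^d$ possibly twisted by tensor powers of the $p_\ell^* TX$ coming from how the line subbundles deform — and these vanish by Proposition \ref{prop1} together with Lemma \ref{lem2}. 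I expect the main obstacle to be the bookkeeping in this last step: precisely identifying which twisted bundle on $X^n$ (with which diagonal divisor coefficients and which $\alpha_\ell$'s) governs the horizontal component of a vector field, and checking the $S_d$-descent so that vanishing upstairs on $X^d$ genuinely forces vanishing of the horizontal part downstairs on $U$. Once that is in place, $h^0(\mathcal{Q},T\mathcal{Q}) = r^2-1$ and, combined with injectivity, $\gamma$ is an isomorphism.
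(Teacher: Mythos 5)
Your proposal is essentially the paper's proof: the paper likewise restricts a global vector field to the open locus $\mathcal{U}\subset\mathcal{Q}$ lying over the reduced divisors $U\subset\mathrm{Sym}^d(X)$, pulls back along the \'etale $S_d$-cover $\widetilde{U}\subset X^d$, kills the horizontal component with Proposition \ref{prop1}, and identifies the vertical component as a constant diagonal element of $sl(r,{\mathbb C})^{\oplus d}$ using the first part of Lemma \ref{lem2} together with $S_d$-invariance, density of $\mathcal{U}$ then giving surjectivity. The ``bookkeeping'' you flag as the main obstacle in fact dissolves: since the fibers of $\mathcal{U}\to U$ are compact and connected, the image of the vector field under $d\beta$ descends to an honest vector field on $U$ whose pullback to $\widetilde{U}$ extends meromorphically to $X^d$ with poles only along the diagonals (no twists by powers of $p_\ell^*TX$ enter at this stage, although Lemma \ref{lem2} would cover them anyway), while the pulled-back fibration over $\widetilde{U}$ is canonically the trivial $({\mathbb C}{\mathbb P}^{r-1})^d$-bundle because $E^0$ is trivial, so the vertical part is an $sl(r,{\mathbb C})^{\oplus d}$-valued meromorphic function on $X^d$ and hence constant.
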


\begin{proof}
The homomorphism $\gamma$ is injective because the homomorphism from
$sl(r,{\mathbb C})$ to the space of holomorphic vector fields on ${\mathbb C}
{\mathbb P}^{r-1}$, given by the standard action of $\text{PGL}(r,{\mathbb C})$
on ${\mathbb C}{\mathbb P}^{r-1}$, is injective.

Let $\text{Sym}^d(X)$ be the $d$-fold symmetric product of $X$. It parametrizes
the effective divisors on $X$ of degree $d$. Let $$U\, \subset\, \text{Sym}^d(X)$$
be the Zariski open subset parametrizing distinct $d$ points; so $U$ parametrizes
the reduced effective divisors of degree $d$. The group of permutations of
$\{1\, ,\cdots\, ,d\}$ will be denoted by $S_d$. Let
\begin{equation}\label{f}
f\, :\, \widetilde{U} \,:=\, X^d\setminus (\bigcup_{1\leq j < k\leq d}
\Delta_{j,k})\, \longrightarrow\, U
\end{equation}
be natural projection, where $\Delta_{j,k}$ is defined in \eqref{djk}. We note
that $f$ is an \'etale Galois covering with Galois group $S_d$.

Sending any quotient $Q\,\in\, {\mathcal Q}$ of $E^0$ to the scheme-theoretic
support of $Q$, we obtain a morphism
\begin{equation}\label{be}
\widetilde{\beta}\, :\,{\mathcal Q}\,\longrightarrow\, \text{Sym}^d(X)\, .
\end{equation}
Define
$$
{\mathcal U}\, :=\, \widetilde{\beta}^{-1}(U)\, \subset\, {\mathcal Q}\, .
$$
The restriction of $\widetilde{\beta}$ to ${\mathcal U}$ will be denoted by
$\beta$. We note that
\begin{equation}\label{be-n}
\beta\, :\, {\mathcal U}\,\longrightarrow\, U
\end{equation}
is a smooth proper surjective morphism. The fiber of $\beta$ over any
$z\, \in\, U$ is the Cartesian product
\begin{equation}\label{z}
{\mathcal U}_z\, :=\, \beta^{-1}(z) \,=\, \prod_{x\in z} P(E^0_x)\, .
\end{equation}
So ${\mathcal U}_z$ is isomorphic to $({\mathbb C}{\mathbb P}^{r-1})^d$.

Take any
$$
\theta\, \in\, H^0({\mathcal Q},\, T{\mathcal Q})\, .
$$
Let $\theta_0$ be the restriction of $\theta$ to ${\mathcal U}$. Let
$$
d\beta\, :\, T{\mathcal U}\,\longrightarrow\, \beta^*TU
$$
be the differential of the projection $\beta$ in \eqref{be-n}. Since
the fibers of $\beta$ are connected and projective, we conclude that
$d\beta(\theta_0)$ is the pullback of a holomorphic vector field on $U$. Let
$\theta'$ be the holomorphic vector field on $U$ such that
$$
d\beta(\theta_0)\,=\, \beta^*\theta'\, .
$$

Let
$$
\theta''\, :=\, f^*\theta'\, \in\, H^0(\widetilde{U},\, T\widetilde{U})
$$
be the pullback, where $f$ is the projection in \eqref{f}. Since $\theta_0$ is
the restriction of a holomorphic vector field on ${\mathcal Q}$, it follows that
$\theta''$ is meromorphic on $X^d$, or in other words,
$$
\theta''\, \in\, H^0(X^d,\, TX^d\otimes
{\mathcal O}_{X^d}(\sum_{1\leq j < k\leq d} m_{j,k}\cdot \Delta_{j,k}))
$$
for sufficiently large integers $m_{j,k}$. Therefore, from
Proposition \ref{prop1} we conclude that $\theta''\, =\, 0$. Hence
$$
d\beta(\theta_0)\,=\, 0\, .
$$
In other words, $\theta_0$ is vertical for the projection $\beta$.

Let
$$
\text{ad}(E^0)\, \subset\, End(E^0)\,=\, E^0\otimes (E^0)^*
$$
be the subbundle of rank $r^2-1$ defined by the sheaf of
endomorphisms of trace zero. For any point $x\, \in\, X$, we have
$$
H^0(P(E^0_x),\, TP(E^0_x))\,=\, \text{ad}(E^0)_x\,=\, sl(r,{\mathbb C})\, .
$$
In view of \eqref{z},
$$
H^0({\mathcal U}_z, \, T{\mathcal U}_z)\,=\, \bigoplus_{x\in z} \text{ad}(E^0)_x
\,=\, \bigoplus_{x\in z} sl(r,{\mathbb C})
$$
for all $z\, \in\, U$.
We will show that the restriction to ${\mathcal U}_z$ of any holomorphic vector
field on $\mathcal Q$ is a constant diagonal element of
$\bigoplus_{x\in z} sl(r,{\mathbb C})$ which is independent of $z$.

Let
$$
{\mathcal Z}\, :=\, f^*{\mathcal Q}\,=\, \widetilde{U}\times_U {\mathcal U}\,
\longrightarrow\, \widetilde{U}
$$
be the pullback to $\widetilde{U}$
of the fiber bundle $\beta\, :\, {\mathcal U}\, \longrightarrow\, U$.
The natural projection
\begin{equation}\label{phi}
\phi\, :\, {\mathcal Z}\, :=\,f^*{\mathcal Q}\,\, \longrightarrow\,{\mathcal U}
\end{equation}
is an \'etale Galois covering with Galois group $S_d$.

As before, let $\theta_0$ be the restriction to $\mathcal U$ of a holomorphic
vector field on $\mathcal Q$. Consider the vector field
$$
\theta_1\, :=\, \phi^*\theta_0 \,\in\, H^0({\mathcal Z},\, T{\mathcal Z})
$$
on ${\mathcal Z}$, where $\phi$ is the projection in
\eqref{phi}. We know that $\theta_1$ is vertical for the projection
${\mathcal Z}\, \longrightarrow\,\widetilde{U}$.

Note that the fibers of the
projection ${\mathcal Z}\, \longrightarrow\,\widetilde{U}$ are canonically
identified with $({\mathbb C}{\mathbb P}^{r-1})^d$.
Therefore, the vector field $\theta_1$ is a holomorphic function on
$\widetilde{U}$ with values in $sl(r,{\mathbb C})^{\oplus d}$. This
holomorphic function is meromorphic on $X^d$ because $\theta_0$ is the
restriction to $\mathcal U$ of a holomorphic vector field on $\mathcal Q$. From
the first part of Lemma \ref{lem2} we know that there are no nonconstant
meromorphic functions on $X^d$ that are holomorphic on $\widetilde{U}$. Hence
$\theta_1$ is a constant function from $\widetilde{U}$ to $sl(r,{\mathbb C})^{\oplus d}$.
This function $\widetilde{U}\, \longrightarrow\,sl(r,{\mathbb C})^{\oplus d}$
has to be invariant under the action of the Galois group $S_d$ on
$\widetilde{U}$ because $\theta_1$ is the pullback of a vector field on $\mathcal U$.
Therefore, there is an element
$$
v\, \in\, sl(r,{\mathbb C})
$$
such that $\theta_1$ is the constant function $(v\, ,\cdots\, ,v)$. Since
$\mathcal U$ is dense in $\mathcal Q$,
this immediately implies that the injective homomorphism
$\gamma$ in \eqref{ga} is surjective.
\end{proof}

Let $\text{Aut}({\mathcal Q})$ be the group of holomorphic automorphisms of
$\mathcal Q$. It is a complex Lie group with Lie algebra
$H^0({\mathcal Q},\, T{\mathcal Q})$; as before, the Lie algebra operation on
$H^0({\mathcal Q},\, T{\mathcal Q})$ is given by the Lie bracket of
vector fields. The connected component of $\text{Aut}({\mathcal Q})$ containing
the identity element will be denoted by $\text{Aut}^0({\mathcal Q})$. The
following is an immediate consequence of Theorem \ref{thm1}.

\begin{corollary}\label{cor1}
The subgroup ${\rm PGL}(r,{\mathbb C})\, \subset\,
{\rm Aut}({\mathcal Q})$ in \eqref{ea} coincides with
${\rm Aut}^0({\mathcal Q})$.
\end{corollary}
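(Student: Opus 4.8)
The goal is to upgrade the infinitesimal statement of Theorem \ref{thm1} to a statement about the identity component of the automorphism group. The plan is as follows. By Theorem \ref{thm1}, the homomorphism $\gamma\colon sl(r,\mathbb C)\to H^0(\mathcal Q,T\mathcal Q)$ is an isomorphism of Lie algebras. Since $\mathcal Q$ is a smooth complex projective variety, a theorem of Matsumura–Oort (or the classical result that the automorphism group scheme of a projective variety is of finite type) tells us that $\mathrm{Aut}(\mathcal Q)$ is a complex Lie group whose Lie algebra is naturally identified with $H^0(\mathcal Q,T\mathcal Q)$, the space of global holomorphic vector fields, with the Lie bracket. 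Hence the derivative at the identity of the homomorphism $F$ (equivalently, of the action map in \eqref{ea}) is exactly $\gamma$.

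The key deduction is then purely group-theoretic. A homomorphism of connected Lie groups that induces an isomorphism on Lie algebras is a covering map, hence surjective with discrete kernel. Applied here, the image $\mathrm{PGL}(r,\mathbb C)$ is an open, and therefore also closed, connected subgroup of $\mathrm{Aut}^0(\mathcal Q)$; since $\mathrm{Aut}^0(\mathcal Q)$ is connected this forces the image to be all of $\mathrm{Aut}^0(\mathcal Q)$. It remains to see that $F$ has trivial kernel, i.e.\ that the action \eqref{ea} of $\mathrm{PGL}(r,\mathbb C)$ on $\mathcal Q$ is faithful; but this is already recorded in the text preceding \eqref{ea}, where the action induced by $\mathrm{GL}(r,\mathbb C)$ on $E^0={\mathcal O}_X^{\oplus r}$ is noted to be effective for $\mathrm{PGL}(r,\mathbb C)$ (concretely, the action on a fiber $P(E^0_x)\cong\mathbb{CP}^{r-1}$ is the standard faithful one). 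Therefore $F$ is injective with open image inside the connected group $\mathrm{Aut}^0(\mathcal Q)$, hence an isomorphism of Lie groups, which is precisely the assertion of the corollary.

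The only point requiring a little care — and the one I would regard as the main, if modest, obstacle — is the identification of $H^0(\mathcal Q,T\mathcal Q)$ with $\mathrm{Lie}(\mathrm{Aut}(\mathcal Q))$ and the fact that $\mathrm{Aut}(\mathcal Q)$ is a finite-dimensional complex Lie group at all; for this one invokes that $\mathcal Q$ is projective (indeed a smooth projective variety of dimension $rd$, as established in Section 3), so that the automorphism functor is represented by a group scheme locally of finite type whose identity component is of finite type, with tangent space at the identity equal to $H^0(\mathcal Q,T\mathcal Q)$. Granting this standard input, the corollary is immediate: $F$ is injective, $dF=\gamma$ is an isomorphism of Lie algebras, so $F$ is a local isomorphism at the identity, its image is open and closed and connected in the connected group $\mathrm{Aut}^0(\mathcal Q)$, and therefore $F(\mathrm{PGL}(r,\mathbb C))=\mathrm{Aut}^0(\mathcal Q)$.
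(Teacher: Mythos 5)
Your proposal is correct and is essentially the argument the paper has in mind: the paper records that $\mathrm{Aut}(\mathcal Q)$ is a complex Lie group with Lie algebra $H^0(\mathcal Q, T\mathcal Q)$ and then declares the corollary an immediate consequence of Theorem \ref{thm1}, which is exactly the open-and-closed-image deduction you spell out. Your write-up merely makes explicit the standard inputs (finite-dimensionality of $\mathrm{Aut}(\mathcal Q)$ for a projective variety, effectiveness of the $\mathrm{PGL}(r,\mathbb C)$-action) that the paper leaves implicit.
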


\section{Torus action on the Quot scheme}

Let $T\,\subset\, {\rm PGL}(r,{\mathbb C})$ be the maximal torus consisting
of diagonal matrices. Restrict the action of ${\rm PGL}(r,{\mathbb C})$ on
${\mathcal Q}$ in \eqref{ea} to the subgroup $T$. Let
$$
{\mathcal Q}^T\, \subset\, \mathcal Q
$$
be the subset fixed pointwise by this action of $T$; it is
a disjoint union of complex submanifolds of $\mathcal Q$. We will recall the
description of the connected components of ${\mathcal Q}^T$ given in \cite{bifet}.

Consider a point of ${\mathcal Q}$ given by an exact sequence 
\[
 0\,\longrightarrow\, {\mathcal F} \,\stackrel{\iota}{\longrightarrow}\,
{\mathcal O}^{\oplus r}_X\,\longrightarrow\, Q\,\longrightarrow \, 0\, . 
\]
This exact sequence corresponds to a fixed point of the torus action
on $\mathcal Q$ if and only if the homomorphism $\iota$ decomposes as
\[
 \iota\,=\,\bigoplus_{j=1}^r \iota_j \,:\, \bigoplus_{j=1}^r {\mathcal L}_j 
\,\hookrightarrow \, {\mathcal O}^{\oplus r}_X\, ,
\]
where each $\iota_j$ is the inclusion of some ideal sheaf
\[
 \iota_j\,:\, {\mathcal L}_j \,\hookrightarrow\, {\mathcal O}_X
\]
(see \cite[p. 610]{bifet}).
We may write ${\mathcal L}_j\,=\,{\mathcal O}_X(-D_j)$, where $D_j$
is the divisor for $\iota_j$. We have
\[
 \sum_{j=1}^r \deg(D_j)\,=\,d
\]
by the definition of $\mathcal Q$.

Denote by ${\bf Part}_r^d$ the set of partitions of $d$ of length $r$. So
$(d_1\, ,d_2\, ,\cdots \, ,d_r)\,\in \,{\bf Part}_r^d$ if and only if $d_j$
are nonnegative integers with
\[
 \sum_{j=1}^r d_j\,=\, d\, .
\]

By $\text{Sym}^0(X)$ we will mean a point.

\begin{proposition}\label{prop-f}
The fixed point locus is a disjoint union
 \[
 {\mathcal Q}^T \,=\, \coprod_{(d_1,\cdots ,d_r)\,\in\, {\bf Part}_r^d} {\rm
Sym}^{d_1}(X) \times \cdots \times {\rm Sym}^{d_r} (X)\, .
 \]
\end{proposition}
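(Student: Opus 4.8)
The plan is to make explicit the bijection between torus-fixed points and tuples of divisors, then identify the connected components and their scheme structure. First I would unpack the description of fixed points already supplied in the text: a point of $\mathcal{Q}$ with kernel $\mathcal{F} \hookrightarrow \mathcal{O}_X^{\oplus r}$ is $T$-fixed exactly when the inclusion splits as a direct sum $\bigoplus_{j=1}^r \iota_j$ with $\iota_j \colon \mathcal{O}_X(-D_j) \hookrightarrow \mathcal{O}_X$ given by an effective divisor $D_j$, and the degree constraint forces $\sum_j \deg(D_j) = d$. The data of such a splitting is precisely an $r$-tuple $(D_1,\dots,D_r)$ of effective divisors on $X$ with $\sum_j \deg D_j = d$; grouping by the vector $(d_1,\dots,d_r) = (\deg D_1,\dots,\deg D_r) \in \mathbf{Part}_r^d$ partitions the fixed locus, and for fixed $(d_1,\dots,d_r)$ the set of such tuples is naturally $\mathrm{Sym}^{d_1}(X)\times\cdots\times\mathrm{Sym}^{d_r}(X)$, since an effective divisor of degree $d_i$ is the same as a point of $\mathrm{Sym}^{d_i}(X)$ (with $\mathrm{Sym}^0(X)$ a point, matching the convention stated just above).

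Next I would promote this set-theoretic bijection to an isomorphism of schemes, working with the functor of points. Given a scheme $S$ with trivial $T$-action, an $S$-point of $\mathcal{Q}^T$ is a $T$-equivariant quotient $\mathcal{O}_{X\times S}^{\oplus r} \twoheadrightarrow \mathcal{Q}$ flat over $S$ with zero-dimensional support in the $X$-direction; equivariance decomposes everything according to the characters of $T$ acting on the $r$ summands, so the kernel is $\bigoplus_{j=1}^r \mathcal{I}_j$ with each $\mathcal{I}_j \subset \mathcal{O}_{X\times S}$ an ideal sheaf flat over $S$ whose quotient is $S$-flat of relative length $d_j$ on each connected component of $S$. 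Such an $\mathcal{I}_j$ is exactly the universal object classified by $\mathrm{Sym}^{d_j}(X) = \mathrm{Hilb}^{d_j}(X)$ (for a smooth curve the Hilbert scheme of points coincides with the symmetric product), so the functor $\mathcal{Q}^T$ decomposes as the disjoint union over $\mathbf{Part}_r^d$ of the product functors $\prod_j \mathrm{Hilb}^{d_j}(X)$, giving the claimed isomorphism.

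To complete the argument I would check that $\mathcal{Q}^T$, as defined in the text (the fixed subscheme for a torus acting on a smooth projective variety), is indeed the scheme representing this functor: by the standard theory of torus actions on smooth varieties, $\mathcal{Q}^T$ is a smooth closed subscheme whose tangent space at a fixed point is the $T$-invariant part of the tangent space of $\mathcal{Q}$, and one verifies that this matches the tangent space $\bigoplus_j H^0(X, \mathcal{O}_{D_j})$ (equivalently $\bigoplus_j \mathrm{Hom}(\mathcal{I}_j, \mathcal{O}_X/\mathcal{I}_j)$) of the proposed model, so the natural map is an isomorphism on tangent spaces and hence, both sides being smooth, an isomorphism. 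The main obstacle I anticipate is not the combinatorics but the bookkeeping in the equivariant decomposition over an arbitrary base $S$: one must be careful that $T$-equivariance of the quotient on $X\times S$ really does force the splitting into eigen-summands compatibly in families (including on connected components of $S$ where the partition could a priori differ), and that each summand's quotient being a torsion $\mathcal{O}_X$-module translates cleanly into the Hilbert scheme condition. Once that is pinned down, the identification with $\coprod_{(d_1,\dots,d_r)} \mathrm{Sym}^{d_1}(X)\times\cdots\times\mathrm{Sym}^{d_r}(X)$ is immediate.
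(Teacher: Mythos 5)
Your proposal is correct and follows essentially the same route as the paper: both rest on the displayed characterization (due to Bifet) of the $T$-fixed points as those quotients whose kernel splits as $\bigoplus_j \mathcal{O}_X(-D_j)$, and then identify the stratum indexed by $(d_1,\dots,d_r)$ with $\mathrm{Sym}^{d_1}(X)\times\cdots\times\mathrm{Sym}^{d_r}(X)$. The paper's own proof is much terser --- it simply constructs the inclusion of each product of symmetric products via direct sums of rank-one quotients and asserts that these cover the fixed locus disjointly --- whereas you additionally supply the functor-of-points and tangent-space verifications that the set-theoretic bijection is an isomorphism of the scheme-theoretic fixed locus; this is a legitimate strengthening of detail rather than a different argument.
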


\begin{proof}
If $r\,=\,1$, then $\text{Quot}({\mathcal O}_X\, ,e)$ is the
symmetric product ${\rm Sym}^e(X)$, as the map in \eqref{be} is an
isomorphism. If for each $1\, \leq\, i\, \leq\, m$,
$$
f_i\, :\, {\mathcal O}_X\, \longrightarrow\, Q_i
$$
is a torsion quotient of ${\mathcal O}_X$ of degree $e_i$, then
$$
\bigoplus_{i=1}^m f_i\, :\, {\mathcal O}^{\oplus m}_X\, \longrightarrow\,
\bigoplus_{i=1}^m Q_i
$$
is a torsion quotient of ${\mathcal O}^{\oplus m}_X$ of degree $\sum_{i=1}^m e_i$.
Therefore, for each partition $$(d_1\, ,\cdots\, ,d_r)\,\in\, {\bf Part}_r^d$$
we have an inclusion map
\[
 {\rm Sym}^{d_1}(X) \times \cdots\times {\rm Sym}^{d_r} (X)
\,\hookrightarrow\, {\mathcal O}_X^{\oplus r}\, .
\]

It is clear that these subschemes together map onto the fixed point locus
of the torus action. Further, the union is clearly disjoint.
\end{proof}

The cohomology algebra of $\text{Sym}^e(X)$ was computed by Macdonald
\cite[p. 325, (6.3)]{Ma}. In particular, he showed that
$$
b_1(\text{Sym}^e(X))\, :=\, \dim H^1(\text{Sym}^e(X),\, {\mathbb Q})\,=\, 2g\, .
$$
Consequently, 
\begin{equation}\label{b1}
b_1({\rm Sym}^{d_1}(X) \times \cdots\times {\rm Sym}^{d_r} (X))
\,=\, 2g(\sum_{d_i\not=0} 1)\, .
\end{equation}
Therefore, for elements $(d_1\, ,\cdots \, ,d_r)\,\in\, {\bf Part}_r^d$, the first
Betti number $b_1({\rm Sym}^{d_1}(X) \times \cdots\times {\rm Sym}^{d_r} (X))$
attains the minimum value if and only if some $d_i$ is $d$ and the rest are zero.

Hence Proposition \ref{prop-f} has the following corollary:

\begin{corollary}\label{cor-f}
Consider the first Betti number of the connected components of ${\mathcal Q}^T$.
If the first Betti number of a connected component attains the minimum value, then
this component is isomorphic to ${\rm Sym}^d(X)$.
\end{corollary}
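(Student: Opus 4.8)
The plan is to deduce the corollary directly from Proposition~\ref{prop-f} together with Macdonald's Betti-number computation recorded in \eqref{b1}. First I would recall that, by Proposition~\ref{prop-f}, every connected component of ${\mathcal Q}^T$ is of the form ${\rm Sym}^{d_1}(X)\times\cdots\times{\rm Sym}^{d_r}(X)$ for some partition $(d_1\, ,\cdots\, ,d_r)\,\in\,{\bf Part}_r^d$, where ${\rm Sym}^0(X)$ denotes a point; each such product is connected, being a product of symmetric products of the connected surface $X$ (together with some points).

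Next I would invoke \eqref{b1}, which gives
$$
b_1({\rm Sym}^{d_1}(X)\times\cdots\times{\rm Sym}^{d_r}(X))\,=\,2g\cdot(\sum_{d_i\not=0}1)\, ,
$$
a consequence of Macdonald's identity $b_1({\rm Sym}^e(X))\,=\,2g$ for $e\,\geq\,1$ and of the K\"unneth formula (additivity of the first Betti number over products of connected spaces). Since $d\,\geq\,1$, for every partition in ${\bf Part}_r^d$ the quantity $\sum_{d_i\not=0}1$ is a positive integer; hence $b_1\,\geq\,2g$ on every connected component of ${\mathcal Q}^T$, and the value $2g$ is attained by a component exactly when the associated partition has a single nonzero entry.

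Finally I would note that a partition of $d$ of length $r$ with exactly one nonzero entry must have that entry equal to $d$, so the corresponding component is ${\rm Sym}^{d}(X)\times\{{\rm pt}\}^{r-1}$, which is canonically isomorphic to ${\rm Sym}^d(X)$. Since this applies to each of the $r$ partitions of this shape, every connected component of ${\mathcal Q}^T$ whose first Betti number attains the minimum is isomorphic to ${\rm Sym}^d(X)$, which is the assertion. There is no genuine obstacle in this argument — all the substance is already contained in Proposition~\ref{prop-f} and in Macdonald's theorem — and the only point requiring a moment's attention is the use of the standing hypothesis $d\,\geq\,1$, which is precisely what forces the minimum of $\sum_{d_i\not=0}1$ over ${\bf Part}_r^d$ to be $1$ rather than $0$.
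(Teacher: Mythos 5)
Your proposal is correct and follows exactly the paper's argument: Proposition~\ref{prop-f} identifies the components as products of symmetric powers, Macdonald's computation together with K\"unneth gives the formula \eqref{b1} for $b_1$, and minimality forces a single nonzero $d_i$, which must equal $d$. Nothing is missing.
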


Since any maximal torus of ${\rm PGL}(r,{\mathbb C})$ is conjugate to $T$,
Proposition \ref{prop-f} and Corollary \ref{cor-f} remain valid if $T$ is replaced
by any other maximal torus of ${\rm PGL}(r,{\mathbb C})$.

\section{The Torelli Theorem} 

As before, ${\mathcal Q}\, :=\, {\rm Quot}_X({\mathcal O}^{\oplus r}_X,d)$ is the
Quot scheme with $d\, \geq\, 1$. If $g\,=\, 2\, =\, d$, then we assume that $r\, >\, 1$.

Let $X'$ be a compact connected Riemann surface of genus $g'$, with $g'\,\geq\, 2$. 
Fix positive integers $r'$ and $d'$. If $g'\,=\, 2\, =\, d'$, then we assume
that $r'\, >\, 1$. Let
$$
{\mathcal Q}'\, :=\, {\rm Quot}_{X'}({\mathcal O}^{\oplus r'}_{X'},d')
$$
be the Quot scheme parametrizing the torsion quotients of
${\mathcal O}^{\oplus r'}_{X'}$ of degree $d'$.

\begin{theorem}\label{thm2}
The complex manifolds ${\mathcal Q}$ and ${\mathcal Q}'$ are biholomorphic if and only
if the following conditions hold:
\begin{itemize}
\item $X$ is biholomorphic to $X'$,

\item $r\,=\,r'$, and

\item $d\,=\, d'$.
\end{itemize}
\end{theorem}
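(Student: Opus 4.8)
The plan is to deduce the theorem from the structural results already established. The "if" direction is clear: the remark at the end of Section~1 shows that replacing $\mathcal{O}_X^{\oplus r}$ by $L^{\oplus r}$ for a line bundle $L$ gives a canonically isomorphic Quot scheme, and a biholomorphism $X\cong X'$ together with $r=r'$, $d=d'$ produces a biholomorphism ${\mathcal Q}\cong{\mathcal Q}'$ directly from the moduli description. So the substance is the "only if" direction: assume $\Psi\colon{\mathcal Q}\to{\mathcal Q}'$ is a biholomorphism, and recover $X\cong X'$, $r=r'$, $d=d'$.

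First I would pin down $r$ and $d$ from the complex-analytic invariants of ${\mathcal Q}$. The dimension gives $rd=r'd'$. To separate $r$ from $d$, use Theorem~\ref{thm1}/Corollary~\ref{cor1}: the biholomorphism $\Psi$ induces an isomorphism $\text{Aut}^0({\mathcal Q})\cong\text{Aut}^0({\mathcal Q}')$, hence $\mathrm{PGL}(r,{\mathbb C})\cong\mathrm{PGL}(r',{\mathbb C})$ as complex Lie groups. Since $\dim\mathrm{PGL}(r,{\mathbb C})=r^2-1$ determines $r$, we get $r=r'$, and then $rd=r'd'$ forces $d=d'$. (One should note the hypothesis $g=2=d\Rightarrow r\geq 2$ is exactly what is needed to keep the genus case consistent, though in fact $r,d$ are recovered purely from ${\mathcal Q}$.)

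Next, recover $X$. Fix a maximal torus $T\subset\mathrm{PGL}(r,{\mathbb C})=\text{Aut}^0({\mathcal Q})$; under $\Psi$ it maps to a maximal torus $T'\subset\text{Aut}^0({\mathcal Q}')$, which by the remark after Corollary~\ref{cor-f} is conjugate to the standard diagonal torus, so $\Psi$ carries ${\mathcal Q}^T$ biholomorphically onto $({\mathcal Q}')^{T'}$ and respects the decomposition into connected components, matching components with components and preserving their first Betti numbers. By Corollary~\ref{cor-f}, the component of ${\mathcal Q}^T$ with smallest first Betti number is biholomorphic to $\text{Sym}^d(X)$, and likewise on the primed side we get $\text{Sym}^{d'}(X')$; hence $\text{Sym}^d(X)\cong\text{Sym}^{d'}(X')$. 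Since $d=d'$, this reads $\text{Sym}^d(X)\cong\text{Sym}^d(X')$, and the Torelli theorem for symmetric products from \cite{fa} yields $X\cong X'$. The main obstacle is the genus-$2$, degree-$2$ edge case: the cited Torelli theorem for $\text{Sym}^d$ can fail when $d$ is small relative to $g$ (indeed $\text{Sym}^2$ of a genus-$2$ curve is a blow-up of its Jacobian), so one must invoke the standing hypothesis $r\geq 2$ in that case — here I would instead extract $X$ from a \emph{different} component of ${\mathcal Q}^T$, e.g. one of the form $\text{Sym}^{d-1}(X)\times\text{Sym}^1(X)=\text{Sym}^{d-1}(X)\times X$, so that the $X$ factor appears literally, or argue that with $r\geq2$ and $d=2$ the relevant component with the second-smallest Betti number is $X\times X$, from which $X$ is read off directly. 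Assembling these three identifications completes the proof.
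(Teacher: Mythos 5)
Your proposal is correct and follows essentially the same route as the paper: identify $\mathrm{Aut}^0(\mathcal Q)$ with $\mathrm{PGL}(r,\mathbb C)$ via Corollary \ref{cor1} to recover $r$ and $d$, pass to the fixed locus of a maximal torus, pick out $\mathrm{Sym}^d(X)$ as the component of minimal first Betti number, apply Fakhruddin's Torelli theorem, and handle the $g=d=2$ case by instead using the component $X\times X$ (which the paper locates as the component of \emph{maximal} first Betti number). The only step you gloss over is that $X\times X\cong X'\times X'$ forces $X\cong X'$; the paper supplies the short argument (restrict the isomorphism to a slice $X\times\{x_0\}$ and compose with a projection to get a nonconstant, hence biholomorphic, map of genus-two curves).
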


\begin{proof}
Assume that ${\mathcal Q}$ is biholomorphic to and ${\mathcal Q}'$.
We will show that the three conditions in the theorem hold.

Fix a maximal torus
$$
T_0\, \subset\, \text{Aut}^0({\mathcal Q})\, ,
$$
where $\text{Aut}^0({\mathcal Q})$, as before, is the connected component
the automorphism group of $\mathcal Q$ containing the identity element.
Consider the action of $T_0$ on ${\mathcal Q}$. Let $\beta$ be the minimum
value of the first Betti number of the connected components of the fixed point
locus ${\mathcal Q}^{T_0}$. take a connected component $M\, \subset\,
{\mathcal Q}^{T_0}$ such that
$$
b_1(M)\,=\, \beta\, .
$$
{}From Corollary \ref{cor-f} we know that $M\,=\, \text{Sym}^d(X)$.

First assume that at least one of the following two conditions holds:
\begin{enumerate}
\item $\dim M \, \not=\, 2$

\item $b_1(M)\, \not=\, 4$.
\end{enumerate}

These conditions imply that $g\, >\, 2$ if $d\,=\,2$.
Fakhruddin proved that for any compact connected Riemann surface $Y$,
and for any positive integer $d$ such that $d\, \not=\, 2$ if
$\text{genus}(Y)\,=\, 2$, the isomorphism class of $\text{Sym}^d(Y)$
uniquely determines the isomorphism class of $Y$
\cite[Theorem 1]{fa}. From this we conclude that $X$ is isomorphic to $X'$.

Considering the dimension of $T_0$ we conclude that $r\,=\, r'$.
Considering the dimension of $\mathcal Q$ we conclude that $d\,=\, d'$.

Now consider the remaining case where
$$
\dim M \, =\, 2\,=\, \frac{b_1(M)}{2}\, .
$$
Note that these imply that $g\,=\, 2\, =\,d$. Hence $r\, \geq\, 2$ by
the assumption.

Let $\widetilde\beta$ is the maximum value of the
first Betti number of the connected components of the fixed point
locus ${\mathcal Q}^{T_0}$. Let $\widetilde{M}\, \subset\,{\mathcal Q}^{T_0}$ be
a connected component with
$$
b_1(\widetilde{M})\,=\, \widetilde\beta\, .
$$
{}From Proposition \ref{prop-f} and \eqref{b1} we know that
$\widetilde{M}\,=\, X\times X$.

Let $X$ and $Y$ be compact connected Riemann surfaces of genus two such that
$X\times X$ is isomorphic to $Y\times Y$. Fixing an isomorphism
$F\, :\, X\times X\,\longrightarrow\, Y\times Y$, consider the two maps
$$
X\, \longrightarrow\, Y\, , ~\ x\, \longmapsto\, f_i\circ F(x,x_0)\, ,
$$
where $f_i$ is the projection of $Y\times Y$ to the $i$-th factor. One of them
is a nonconstant map, hence it is an isomorphism. Therefore, the isomorphism
class of $X$ is uniquely determined by the isomorphism class of $X\times X$.
This completes the proof.
\end{proof}

\medskip
\noindent
\textbf{Acknowledgements.}\, The first author wishes to thank
McGill University and University of Western Ontario for their hospitality.

\end{document}